\let\oldsqrt\sqrt
\def\sqrt{\mathpalette\DHLhksqrt}
\def\DHLhksqrt#1#2{\setbox0=\hbox{$#1\oldsqrt{#2\,}$}\dimen0=\ht0
  \advance\dimen0-0.2\ht0
  \setbox2=\hbox{\vrule height\ht0 depth -\dimen0}%
  {\box0\lower0.4pt\box2}}
\newlength{\LENGTH}
\def\sumprime{\mathop{\sum{\raise3pt\hbox{${}'$}}}}
\def\clap#1{\hbox to 0pt {\hss #1\hss}}
\newcommand{\im}{\mathrm{Im}}
\newcommand{\re}{\mathrm{Re}}
\newcommand{\mx}[1]{\begin{smallmatrix}#1\end{smallmatrix}}
\newcommand{\mphantom}[1]{\mathrel{\phantom{#1}}}
\newcommand{\Z}{\mathbb{Z}}
\newcommand{\Q}{\mathbb{Q}}
\newcommand{\R}{\mathbb{R}}
\newcommand{\F}{\mathcal{F}}
\newcommand{\UHP}{\mathbb{H}}
\def\SL{\operatorname{SL}}
\def\t{\tau}
\theoremstyle{plain}
\newtheorem{thm}{Theorem}[section] 
\newtheorem{prop}[thm]{Proposition}
\newtheorem{cor}[thm]{Corollary}
\newtheorem{lem}[thm]{Lemma}
\newtheorem*{thm*}{Theorem}
\newtheorem*{keylem*}{Key Lemma}
\theoremstyle{definition}
\theoremstyle{plain}
\begin{document}
\begin{abstract}

We improve the results from \cite{ElSe} about the Eisenstein series
$E_2(z)=1-24\sum_{n=1}^\infty \frac{nq^n}{1-q^n}$. In particular we show that there
exists exactly one (simple) zero in each Ford circle and give an approximation to its location.  \end{abstract}

\title{Estimates on the zeros of $E_2$}
\author{\"Ozlem Imamo\=glu, Jonas Jermann, {\'A}rp{\'a}d T{\'o}th}
\address{Department of Mathematics, ETH Zurich}
\email{ozlem@math.ethz.ch}
\thanks{ \"O. Imamoglu and J. Jermann are supported by SNF 200020-144285}
\address{Department of Mathematics, ETH Zurich}
\email{jjermann@math.ethz.ch}
\address{Eotvos Lorand University, Budapest}
\email{toth@cs.elte.hu}
\date{\today}
\maketitle
\section{Introduction}
Let ${\mathcal M}_k(\Gamma)$ be the space of holomorphic modular forms of weight $k$ for the full modular group $\Gamma=PSL(2,\Z)$. It is well known that ${\mathcal M}_k(\Gamma)$ has dimension $\frac{k}{12} +O(1)$ and a modular form $f\in {\mathcal M}_k$ has $\frac{k}{12} +O(1)$ inequivalent zeros in  a fundamental domain $\Gamma\backslash\UHP.$

 For the cuspidal Hecke eigenforms, it is   a consequence of the recent proof  of the holomorphic Quantum Unique Ergodicity(QUE)   by  Holowinsky and Soundararajan  \cite{HS}  that the zeros are uniformly distributed.
More precisely, 
 for    a sequence $\{f_k\}$ of cuspidal Hecke eigenforms of weight $k$ we have that 
  as $k\rightarrow\infty$ the zeros of $f_k$ become 
equidistributed with respect to the normalized hyperbolic measure $\frac{3}{\pi}\frac{ dx dy}{ y^2}.$

In contrast to this  in the  case   of  Eisenstein series, it was conjectured by R.A. Rankin in 1968 and proved by F.K.C. Rankin and Swinnerton-Dyer \cite{RS-D} that all the zeros, in the standard fundamental domain, of  the series
$$E_k(\t)=\frac{1}{2}\sum_{(c,d)=1} (c\t+d)^{-k}$$
lie on the geodesic  arc $\{z\in\UHP\ : \ |z|=1, 0\leq\re{z}\leq 1/2\}$
and as $k\rightarrow\infty$  they become uniformly distributed on this unit arc. A similar result for the cuspidal Poincare series was proved by R.A. Rankin \cite{Ra}. For generalizations of these results to other Fuchsian groups and  to weakly holomorphic modular functions   see   \cite{AKN}, \cite{DJ},  among many others. 
For some recent work on the zeros of holomorphic Hecke cusp forms that lie on the geodesic segments of the standard fundamental domain see \cite{GS}.

Next we turn our attention to  the location of the zeros of the non-modular   Eisenstein series of weight 2
  \begin{align}
E_2(\t)=1-24\sum_{n=1}^\infty\sigma(n)q^n=1-24\sum_{n=1}^\infty
\frac{nq^n}{1-q^n},\ q=e^{2\pi i\t}.
\end{align}

 As it is well known $E_2(\t)$ is not  modular but it  is   a quasimodular form and
 for $\gamma=\left(\mx{a&b\\c&d}\right)\in \SL_2(\Z)$ satisfies the  transformation property  
 \begin{align}
E_2(\gamma \t)(c\t+d)^{-2}=E_2(\t)+\frac{6}{\pi i}\frac{1}{(\t+\frac{d}{c})},  
\label{E2_trafo}
\end{align}

The zeros  of $E_2$ has already been investigated in \cite{ElSe}.
El Basraoui and Sebbar showed that there are infinitely many non-equivalent
zeros of $E_2$ and two zeros are equivalent if and only if one is a $\Z$-translate
of the other. Of particular interest is the unique zero on the imaginary axis $x=0$
and the unique zero on $x=\frac{1}{2}$. Those two zeros also occur in \cite{SeFa}. 
They were computed by H. Cohen:
\begin{align}
\t_0&=0.52352170001799926680053440480610976968 \dots i\label{E2_z0_zero}\\
\t_{\frac{1}{2}}&=\frac{1}{2}+0.13091903039676244690411482601971302060\dots i\label{E2_z12_zero}
\end{align}

If
\begin{align*}
\mathcal{F}&:=\left\lbrace z\in\UHP\ \middle|\ |z|>1,\ |\re(z)|<\frac{1}{2}\right\rbrace
\cup \left\lbrace z\in\UHP\ \middle|\ |z|\ge 1,\ -\frac{1}{2}\le \re(z)\le 0\right\rbrace
\end{align*}
is the standard (strict) fundamental domain, El Basraoui and Sebbar also show that there are
infinitely many $SL_2(\Z)$-translates of $ \mathcal{F}$ that contain a zero and infinitely many
that do not contain a zero.  

In this note we  improve these results.   More precisely 
  for $(a,c)=1$ recall that the associated Ford circle is the circle on the upper
half plane with center $\frac{a}{c}+\frac{1}{2c^2}i$ and radius
$\frac{1}{2c^2}$ .
Then we have

\begin{thm}\label{maint}
Inside each Ford circle there is a unique (simple) zero of $E_2$ and $E_2$
has no other zeros on the upper half plane.

Moreover in the Ford circle at the cusp
$\frac{a}{c}\in\Q$ the zero satisfies the following approximation:
\begin{align} 
0.000075\frac{1}{c^2}<\left|\t-\frac{a}{c}-\frac{\pi}{6c^2}i\right| < 0.0000777\frac{1}{c^2}.\label{bdH}
\end{align}
\end{thm}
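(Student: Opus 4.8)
The plan is to transport each Ford circle back to the cusp at $\infty$ and there to perturb off the elementary function $1+\tfrac{6}{\pi i}\tfrac1u$. Fix $\tfrac ac$ with $(a,c)=1$ and a matrix $\gamma=\left(\mx{a&b\\c&d}\right)\in\SL_2(\Z)$. Since $\gamma$ carries the horoball $\{\im\t>1\}$ biholomorphically onto the open Ford circle at $\tfrac ac$ and $\gamma\t-\tfrac ac=\tfrac{-1}{c(c\t+d)}$, the transformation law \eqref{E2_trafo} shows that for $\im\t>1$ one has $E_2(\gamma\t)=0$ exactly when $E_2(\t)+\tfrac{6}{\pi i}\tfrac{1}{\t+d/c}=0$. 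Setting $u=\t+d/c$ and $\theta=d/c$ this reads $H(u):=E_2(u-\theta)+\tfrac{6}{\pi i}\tfrac1u=0$ on $\{\im u>1\}$, and I would write $H=M+\varepsilon$ with main part $M(u)=1+\tfrac{6}{\pi i}\tfrac1u$ and error $\varepsilon(u)=E_2(u-\theta)-1=-24\sum_{n\ge1}\sigma(n)e^{-2\pi in\theta}e^{2\pi inu}$. The point is that $M$ has the single simple zero $u_0=\tfrac6\pi i$ in $\{\im u>1\}$, with $M'(u_0)=\tfrac1{u_0}=-\tfrac{\pi i}6$.

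First I would record that $|\varepsilon(u)|\le24\sum_{n\ge1}\sigma(n)e^{-2\pi n\im u}<\tfrac1{20}$ whenever $\im u\ge1$. This already shows $E_2\neq0$ on $\{\im\t\ge1\}$, so the horoball at $\infty$ is zero-free, and it confines the zeros of $H$ to a bounded part of $\{\im u>1\}$. On the boundary of a large box with bottom edge $\im u=1$ one has $|\varepsilon|<|M|$: on that edge $|\varepsilon|<\tfrac1{20}$, while a direct computation gives $|M(x+i)|^2=1+\tfrac6\pi(\tfrac6\pi-2)\tfrac1{1+x^2}$, minimised at $x=0$ with value $(\tfrac6\pi-1)^2\approx0.83$; on the remaining (far) edges $M\approx1$ and $\varepsilon$ is negligible. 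Rouché's theorem then gives that $H$ has exactly as many zeros as $M$ in the box, namely a single zero $u^\ast$, which is therefore simple. Pulling back, each Ford circle contains exactly one simple zero of $E_2$.

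To see there are no further zeros, observe that the complement in $\UHP$ of all the horoballs $\gamma\{\im\t>1\}$, $\gamma\in\Gamma$ — that is, of $\{\im\t>1\}$ together with all the Ford circles — is the $\Gamma$-orbit of the compact set $K:=\{\t\in\overline{\mathcal{F}}:\im(\t)\le1\}$, the part of the standard fundamental domain below height $1$. By \eqref{E2_trafo}, $E_2$ is zero-free on a translate $\gamma K$ precisely when $E_2(\t)\neq\tfrac{6i}{\pi(\t+r)}$ on $K$, where $r=d/c$; as $\gamma$ varies, $r$ runs through $\Q$, so it suffices to prove this for every $\t\in K$ and every $r\in\R$. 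As $r$ ranges over $\R$ the point $\tfrac{6i}{\pi(\t+r)}$ traces the circle $\bigl|w-\tfrac{3}{\pi\im\t}\bigr|=\tfrac{3}{\pi\im\t}$, which passes through the origin, so everything reduces to the single uniform inequality
\[
\pi\,\im(\t)\,\bigl|E_2(\t)\bigr|^2<6\,\re E_2(\t)\qquad(\t\in K),
\]
stating that $E_2(\t)$ lies strictly inside that circle; equivalently $\bigl|E_2^\ast(\t)\bigr|<\tfrac{3}{\pi\im\t}$ for the weight-$2$ nonholomorphic Eisenstein series $E_2^\ast=E_2-\tfrac{3}{\pi\im\t}$, which vanishes at the elliptic points $i$ and $\rho=e^{i\pi/3}$ (where $E_2$ sits exactly at the centre of the circle) — a reassuring check. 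I expect this compact estimate, to be verified rigorously from the $q$-expansion with explicit tail bounds over $K$, to be the main obstacle, since it is exactly what upgrades El Basraoui--Sebbar's \cite{ElSe} ``infinitely many zeros'' to the precise count of one per Ford circle.

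Finally, transforming back via $w:=\gamma\t^\ast=\tfrac ac-\tfrac1{c^2u^\ast}$ — the point $\t$ of \eqref{bdH} — gives, using $-\tfrac1{u_0}=\tfrac{\pi i}6$,
\[
\Bigl|w-\tfrac ac-\tfrac{\pi}{6c^2}i\Bigr|=\frac1{c^2}\,\frac{|u^\ast-u_0|}{|u^\ast|\,|u_0|}.
\]
The Rouché step already localises $u^\ast$ near $u_0$; solving $M(u^\ast)=-\varepsilon(u^\ast)$ perturbatively, with $M'(u_0)=-\tfrac{\pi i}6$ and $\varepsilon(u_0)=-24e^{-12}e^{-2\pi i\theta}+O(e^{-24})$, yields $|u^\ast-u_0|=\tfrac{144}{\pi}e^{-12}(1+o(1))$ and hence $\tfrac{|u^\ast-u_0|}{|u^\ast|\,|u_0|}=4\pi e^{-12}(1+o(1))\approx7.7\times10^{-5}$. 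Carrying the error terms explicitly — the higher $q$-powers in $\varepsilon$, the dependence of $\im u^\ast$ on the phase $\theta$ through $|\varepsilon(u^\ast)|=24e^{-2\pi\im u^\ast}$, and the gap between $|u^\ast|\,|u_0|$ and $|u_0|^2=36/\pi^2$ — then confines the ratio to the window $(0.000075,\,0.0000777)$ of \eqref{bdH}.
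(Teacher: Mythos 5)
Your proposal is correct in outline and takes a genuinely different route from the paper. The paper never leaves the fundamental domain: it studies $f(z)=z-\frac{6i}{\pi E_2(z)}$, shows via the implicit function theorem and the derivative bound $|f'-1|<0.57$ that the level set $A=\{\im f=0\}\cap\overline{\F}$ is a graph along which $\re f$ is strictly monotone (since $\frac{d}{dx}u(x,\phi(x))=|f'|^2/\partial_y v>0$), and uses the El Basraoui--Sebbar zeros on $x=\pm\frac12$ as endpoint values to conclude $f(A)=[-1/2,1/2]$ bijectively (Theorem \ref{f(A)}); existence and uniqueness per Ford circle then follow from the dictionary $E_2(\gamma z)=0\iff f(z)=-d/c$. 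You instead work at the cusp: Rouch\'e against the main term $M(u)=1+\frac{6}{\pi i u}$ delivers existence, uniqueness and simplicity in each horoball in one stroke, and notably requires no input from \cite{ElSe} at all, which is a genuine structural gain. Your bottom-edge computation $|M(x+i)|^2=1+\frac6\pi(\frac6\pi-2)\frac1{1+x^2}\ge(\frac6\pi-1)^2$ against $|\varepsilon|<\frac1{20}$ is correct. Moreover, your exclusion step via the circle $\bigl|w-\frac{3}{\pi y}\bigr|=\frac{3}{\pi y}$ is, after unwinding, exactly the paper's Lemma \ref{lem-Imf}: the inequality $\pi y|E_2|^2<6\re E_2$ on the strip $\sqrt3/2\le y\le1$ is literally the statement $\im f<0$ there. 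So the step you flag as ``the main obstacle'' is in fact the easiest part: by the same estimate as \eqref{bdE}, $|E_2^*|\le\bigl|1-\frac{3}{\pi y}\bigr|+\frac{24e^{-2\pi y}}{(1-e^{-2\pi y})^3}\le 0.103+0.106<\frac{3}{\pi}\le\frac{3}{\pi y}$ uniformly on that strip, with ample margin. (One point to make explicit: for $c=0$ your reduction degenerates, but translations preserve zeros and the same strict inequality already gives $E_2\neq0$ on $K$, since at a zero it would read $0<0$.)

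The one genuinely incomplete piece is the numerical window in \eqref{bdH}. Your leading constant $4\pi e^{-12}\approx 7.72\times10^{-5}$ is correct and sits inside $(0.000075,\,0.0000777)$, but the final paragraph only names the error sources (higher $q$-powers, phase dependence of $\im u^*$, the gap between $|u^*||u_0|$ and $|u_0|^2$) without bounding them; as written it is an assertion, not a proof. The paper closes this by a short bootstrap (Proposition \ref{better_est}): the identity $\bigl|z-(-\frac dc+\frac6\pi i)\bigr|=\frac6\pi\frac{|E_2(z)-1|}{|E_2(z)|}$, iterated twice, pins down $\im z\in(1.909,1.91)$ and $|q|\le 6.2\times10^{-6}$, after which the two-sided bounds $23.48|q|<|E_2-1|<24.32|q|$ and the exact relation $\t-\frac ac-\frac{\pi}{6c^2}i=\frac{\pi}{6c^2}i\,(E_2(z)-1)$ give both sides of \eqref{bdH} cleanly. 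The analogous bookkeeping in your normalization (self-consistently solving $|\varepsilon(u^*)|\approx 24e^{-2\pi\im u^*}$ with explicit remainders) would work, but you must actually carry it out; the window's upper margin is only about $0.6\%$ above the leading constant, so uncontrolled $o(1)$ terms are not good enough.
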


Since $\pi/6$ is close to $1/2$, the zeros are almost at the center of each Ford circle and in fact when plotted on a not small enough scale  as in   the following picture,  the zeros seem to have remarkable uniformity on the upper half plane.

\begin{figure}[h!]
\centering
\includegraphics[width=0.9\textwidth]{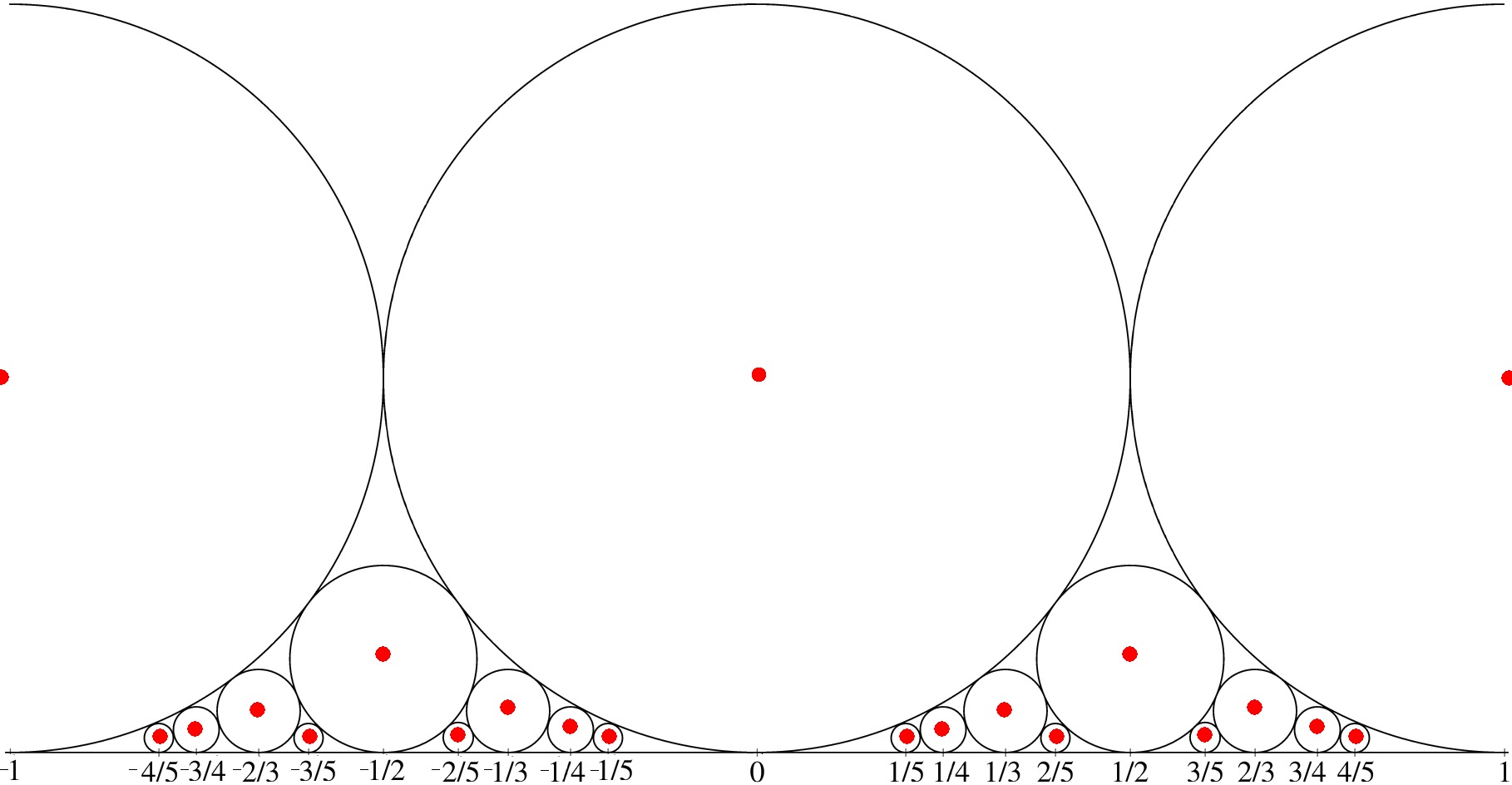}
\end{figure}
%
Theorem \ref{maint} restricts the possible location of the zeros to a very thin
annular region in each Ford circle. For  the images of these zeros  inside the standard fundamental domain $\F$  we have

\begin{thm}\label{main2}
Let $\t=\gamma z$ be the unique zero of $E_2$ around $\frac{a}{c}$  with
$\gamma=\left(\mx{a&b\\c&d}\right)$,  $c>0$   and
$z\in\mathcal{F}$ then we have
\begin{align}  
0.00027&<\left|z-\left(-\frac{d}{c}+\frac{6}{\pi}i\right)\right|<0.00029.\label{bdF}
\end{align}
 \end{thm}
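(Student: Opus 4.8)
The plan is to reduce the statement to the estimate \eqref{bdH} of Theorem \ref{maint} by an explicit Möbius computation. Writing $\gamma^{-1}=\left(\mx{d&-b\\-c&a}\right)$ and using $ad-bc=1$, I would first compute the image $z=\gamma^{-1}\t$ of the zero $\t$ directly. Setting $w=\t-\frac{a}{c}$ (which is small, since $\t$ lies in the Ford circle at $\frac{a}{c}$), the numerator $d\t-b=\frac{ad-bc}{c}+dw=\frac1c+dw$ and the denominator $-c\t+a=-cw$ collapse nicely, giving the clean formula
\begin{equation*}
z=\gamma^{-1}\t=\frac{d\t-b}{-c\t+a}=-\frac{d}{c}-\frac{1}{c^2 w}.
\end{equation*}
Thus the horizontal shift $-\frac{d}{c}$ appears automatically, and everything reduces to controlling the single complex number $u:=c^2 w=c^2\bigl(\t-\frac{a}{c}\bigr)$.

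Next I would rewrite the quantity to be estimated in terms of $u$. Since $\frac{6}{\pi}i=-\bigl(\frac{\pi}{6}i\bigr)^{-1}$, a short manipulation gives
\begin{equation*}
z-\Bigl(-\frac{d}{c}+\frac{6}{\pi}i\Bigr)=-\frac{1}{u}+\frac{1}{\frac{\pi}{6}i}=\frac{u-\frac{\pi}{6}i}{u\cdot\frac{\pi}{6}i},
\end{equation*}
so that
\begin{equation*}
\Bigl|z-\Bigl(-\frac{d}{c}+\frac{6}{\pi}i\Bigr)\Bigr|=\frac{\bigl|u-\frac{\pi}{6}i\bigr|}{\frac{\pi}{6}\,|u|}.
\end{equation*}
This is the crux: the target quantity is exactly the error $\bigl|u-\frac{\pi}{6}i\bigr|$ supplied by Theorem \ref{maint}, rescaled by the essentially constant factor $\frac{\pi}{6}|u|$.

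Finally I would feed in \eqref{bdH}. Multiplying that inequality by $c^2$ yields $0.000075<\bigl|u-\frac{\pi}{6}i\bigr|<0.0000777$, and the reverse triangle inequality then pins $|u|$ down to the interval $\bigl(\frac{\pi}{6}-0.0000777,\ \frac{\pi}{6}+0.0000777\bigr)$. Substituting these into the displayed identity — using the larger value of $\frac{\pi}{6}|u|$ for the lower bound and the smaller value for the upper bound — gives
\begin{equation*}
\frac{0.000075}{\frac{\pi}{6}\bigl(\frac{\pi}{6}+0.0000777\bigr)}<\Bigl|z-\Bigl(-\frac{d}{c}+\frac{6}{\pi}i\Bigr)\Bigr|<\frac{0.0000777}{\frac{\pi}{6}\bigl(\frac{\pi}{6}-0.0000777\bigr)},
\end{equation*}
and evaluating the two numerical constants confirms that the left side exceeds $0.00027$ while the right side stays below $0.00029$, which is exactly \eqref{bdF}.

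The only place demanding care is bookkeeping the directions of the inequalities through the division (a larger denominator shrinks the quotient), so that the rounding in \eqref{bdH} is propagated conservatively. I expect this to be the sole, and rather minor, obstacle, since all the genuine geometric content is already contained in Theorem \ref{maint}; the present statement is essentially its transport through the Möbius map $\gamma^{-1}$ and the reciprocal relation between the near-center location $\frac{\pi}{6c^2}i$ in the Ford circle and the near-horizontal height $\frac{6}{\pi}i$ in the fundamental domain.
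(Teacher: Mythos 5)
Your proposal is correct, and the computations check out: the Möbius identity $z=-\frac{d}{c}-\frac{1}{c^2w}$ is right, the exact formula $\left|z-\left(-\frac{d}{c}+\frac{6}{\pi}i\right)\right|=\frac{\left|u-\frac{\pi}{6}i\right|}{\frac{\pi}{6}|u|}$ holds, and propagating \eqref{bdH} conservatively gives roughly $(0.0002735,\,0.0002835)$, safely inside $(0.00027,\,0.00029)$. But your route is genuinely different from the paper's. The paper proves \eqref{bdF} \emph{first}, in Proposition \ref{better_est}, directly from the relation $f(z)=-d/c$: it writes $\left|z-\left(-\frac{d}{c}+\frac{6}{\pi}i\right)\right|=\frac{6}{\pi}\frac{|E_2(z)-1|}{|E_2(z)|}$ and bootstraps estimates on $y=\im z$ and $|q|$ (ending with $1.909<y<1.91$, $|q|\le 6.18\times 10^{-6}$, $0.000144<|E_2(z)-1|<0.000149$); the bound \eqref{bdH} is then \emph{deduced afterwards} via $\left|\t-\frac{a}{c}-\frac{\pi}{6c^2}i\right|=\frac{\pi}{6c^2}|E_2(z)-1|$. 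You invert that order, taking \eqref{bdH} as input and transporting it through $\gamma^{-1}$. This is logically legitimate — Theorem \ref{maint} is stated and proved independently of Theorem \ref{main2} — and note that your two identities are secretly the same algebra: by the transformation law one has $u=c^2\left(\t-\frac{a}{c}\right)=\frac{\pi i}{6}E_2(z)$, so $|u|=\frac{\pi}{6}|E_2(z)|$ and $\left|u-\frac{\pi}{6}i\right|=\frac{\pi}{6}|E_2(z)-1|$, recovering the paper's expression. What your approach buys is a clean, exact equivalence: it shows the annulus of \eqref{bdH} in the Ford circle and the annulus of \eqref{bdF} in $\F$ correspond under the Möbius map (making the $\frac{\pi}{6}\leftrightarrow\frac{6}{\pi}$ duality transparent), at the cost of only a negligible numerical widening. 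What it does not buy is independence: all the analytic content (the bootstrap on $E_2$) still lives in the proof of \eqref{bdH} — indeed in the paper the lower bound there requires $y<1.91$, which is itself extracted from the upper bound in \eqref{bdF} — so your argument is a valid reduction of Theorem \ref{main2} to Theorem \ref{maint}, not a replacement for the estimates in Proposition \ref{better_est}. One small point of care you handled correctly: since the constants in \eqref{bdH} are rounded, the margin in the division step genuinely matters, and your conservative choice of denominators $\frac{\pi}{6}\pm 0.0000777$ is exactly what preserves it.
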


We may rephrase Theorem \ref{maint} as saying that around each finite cusp there is a unique
zero of $E_2$. i.e. given $(a,c)=1$ with $c>0$, there is a unique matrix
$\gamma=\left(\mx{a&b\\c&d}\right)\in SL_2(\Z)$ such that
$\gamma\mathcal{F}$ contains a zero.

While  preparing this manuscript we learned  that 
  R. Wood   and  M. Young \cite{YW} proved a quantitatively equivalent result about the locations
of the zeros of $E_2$ simultaneously and independently of us. 

It is
possible to give further improvements on these approximations by
expanding either in Taylor or Fourier series  but we will not pursue this
in this paper at any depth.
   In the next section we give the proofs of Theorem \ref{maint} and Theorem \ref{main2} and in the final section we provide some numerical data on the Fourier and Taylor expansions.

\section{Proofs}
The proofs of Theorem \ref{maint} and Theorem \ref{main2} are  based on the mapping properties of the function  $f(z)$ defined by

\begin{equation}\label{f}
f(z):=z-\frac{6i}{\pi E_2(z)}.
\end{equation}
We will see that    $f$ is analytic in the fundamental domain, and even on $\{z: \im z> 0.53\}$.  Let $z$ be in the fundamental domain $\F$, and let $\gamma =\left( \begin{smallmatrix} a&b\\c&d\end{smallmatrix} \right)
 \in SL_2(\Z)$. Then based on the transformation formula of $E_2$, one has  the   fundamental relation  
\begin{equation}\label{fandE}
E_2(\gamma z)=0 \iff   f(z)=-d/c
\end{equation}

Therefore we are interested in the solutions of the equation
$$
f(z)=-d/c \quad\quad z \in \F.
$$

Here $d,c \in {\mathbf Z}$. Let's start with the solution of
$$
f(z) \in \R \quad\quad z \in \F.
$$ 
Let $f=u+iv$ and
\begin{equation}
A=\{ z\in \overline{\F}: v(z)=0\}.\label{def_A}
\end{equation}

As a simple application of the implicit function theorem it will be proved below that $A$ is  the graph of a function $\phi: [-1/2,1/2] \to (0,\infty)$. More precisely  if $x+iy \in \F$ and $v(x,y)=0$ then $y=\phi(x)$. 

In view of  the relation (\ref{fandE})
the possible values $c,d\in {\mathbf Z}$ that arise from a zero  of $E_2$ are those for which $-d/c\in f(A)$. Let $\t=\gamma z$ be a zero of $E_2$ with   $\gamma =\left( \begin{smallmatrix} a&b\\c&d\end{smallmatrix} \right)
 \in SL_2(\Z)$ and  $z \in \F$. Then we will show that $|d/c|\leq 1/2$ and conversely any such $-d/c\in [-1/2,1/2]$ arises from a zero of $E_2(z)$.

Namely we have 

\begin{thm} \label{f(A)}

$f(A)=[-1/2,1/2]$. \end{thm}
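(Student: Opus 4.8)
The plan is to parametrise $A$ and to show that $\re f$ is strictly monotone along it, taking the values $\mp 1/2$ at the two ends. Since (by the implicit function theorem argument already in place) $A$ is the graph of $\phi\colon[-1/2,1/2]\to(0,\infty)$, I would write $z(x)=x+i\phi(x)$ and set $g(x):=f(z(x))$. On $A$ we have $v=\im f=0$, so $f(A)\subseteq\R$ and $g(x)=\re f(z(x))$ is a real, continuous function on $[-1/2,1/2]$ with $f(A)=g([-1/2,1/2])$. It therefore suffices to prove $g([-1/2,1/2])=[-1/2,1/2]$.

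First I would pin down the endpoints. On the vertical line $\re z=1/2$ the nome $q=e^{2\pi i z}=-e^{-2\pi\im z}$ is real, so $E_2(z)=1-24\sum_n\sigma(n)q^n\in\R$ there; hence $\tfrac{6i}{\pi E_2(z)}$ is purely imaginary and $\re f(z)=\re z=1/2$ along the whole line. Evaluating at the (unique) point of $A$ lying on it gives $g(1/2)=1/2$, and the same computation on $\re z=-1/2$ gives $g(-1/2)=-1/2$. These identities are exact and require no estimates.

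Next I would establish monotonicity. Differentiating the identity $v(x,\phi(x))\equiv 0$ gives $\phi'(x)=-v_x/v_y$ (using $v_y\neq 0$, which is exactly the hypothesis of the implicit function theorem that produced $\phi$), so by the Cauchy--Riemann equations $u_x=v_y$, $u_y=-v_x$,
\[
g'(x)=u_x+u_y\phi'(x)=\frac{u_xv_y-u_yv_x}{v_y}=\frac{v_x^2+v_y^2}{v_y}=\frac{|f'(z(x))|^2}{v_y}.
\]
Since $f$ is analytic and $f'\neq 0$ on $A$ (equivalently $\nabla v\neq 0$), we have $|f'|^2>0$, while $v_y$ is continuous and nonvanishing on the connected arc $A$ and so keeps a constant sign. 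Thus $g'$ keeps a constant sign and $g$ is strictly monotone; as $g(-1/2)=-1/2<1/2=g(1/2)$ it is strictly increasing, whence $g([-1/2,1/2])=[-1/2,1/2]$ and $f(A)=[-1/2,1/2]$.

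The only inputs borrowed from the surrounding analysis are that $A$ is a graph and that $f'$ (equivalently $\nabla v$) does not vanish on $A$, and this is where I expect the real work to lie. Both reduce to controlling $E_2$ and $E_2'=-48\pi i\sum_n n\sigma(n)q^n$ along $A$: at the relevant heights $\im z\approx 6/\pi$ the series converge so rapidly that $E_2$ is within roughly $10^{-4}$ of $1$ and $\tfrac{6i}{\pi}E_2'/E_2^2$ is of size about $2\times 10^{-3}$, forcing $f'=1+\tfrac{6i}{\pi}E_2'/E_2^2$ to stay bounded away from $0$. I expect this quantitative $q$-expansion bound guaranteeing $f'\neq 0$ to be the main technical obstacle; granting it, the level-curve monotonicity argument makes the theorem immediate. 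One could alternatively shorten the argument using the reflection symmetry $f(-\bar z)=-\overline{f(z)}$, which forces $g$ to be odd with $g(0)=0$, but this is not needed.
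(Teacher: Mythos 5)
Your proposal is correct and follows essentially the same route as the paper: both parametrise $A$ as the graph of $\phi$, pin the endpoint values at $\pm 1/2$, and then show strict monotonicity along $A$ via the identity $\frac{d}{dx}\,u(x,\phi(x)) = \bigl(v_x^2+v_y^2\bigr)/\partial_y v = |f'|^2/\partial_y v > 0$, with the graph property and the nonvanishing of $\partial_y v$ supplied by exactly the quantitative bounds on $E_2$ and $E_2'$ that you flag as the technical input (the paper's Lemmas \ref{lem-bdE}--\ref{prop-monotone}). The one small deviation is at the endpoints, where the paper invokes El Basraoui--Sebbar's zeros of $E_2$ on the lines $\re z=\pm\frac{1}{2}$, while you obtain $g(\pm 1/2)=\pm 1/2$ directly from the reality of $q$, hence of $E_2$, on those lines --- an equally valid and slightly more self-contained justification of the same step.
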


The first claim in Theorem \ref{maint} now follows immediately from 

\begin{cor}\label{cor1}
If $z \in \F$, $\gamma=\left( \begin{smallmatrix} a&b\\c&d\end{smallmatrix} \right) \in SL_2(\Z)$ and $E_2(\gamma z)=0$ then $|d/c|\leq 1/2$.
\end{cor}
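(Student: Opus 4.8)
The plan is to read the corollary off directly from the fundamental relation (\ref{fandE}) and Theorem \ref{f(A)}, since essentially all of the substance has been absorbed into the identity $f(A)=[-1/2,1/2]$; what remains is to align the hypotheses and dispose of a degenerate case. First I would note that the statement only needs proof when $c\neq 0$. Indeed, if $c=0$ then $ad-bc=1$ forces $a=d=\pm 1$, so $\gamma$ acts as an integer translation $z\mapsto z\pm b$; by the periodicity of $E_2$ this gives $E_2(\gamma z)=E_2(z)$. But $\F\subset\{z:\im z>0.53\}$, and on that region $f(z)=z-6i/(\pi E_2(z))$ is analytic, which forces $E_2$ to be nonvanishing on $\F$. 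Hence $E_2(\gamma z)\neq 0$, contradicting the hypothesis, so $c\neq 0$ and $-d/c$ is a well-defined real number.

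With $c\neq 0$, the argument is a short chain of implications. By (\ref{fandE}) the assumption $E_2(\gamma z)=0$ is equivalent to $f(z)=-d/c$. Writing $f=u+iv$ and using $-d/c\in\R$, this immediately gives $v(z)=0$. Since $z\in\F\subset\overline{\F}$, the defining condition (\ref{def_A}) of $A$ places $z\in A$, so that $-d/c=f(z)\in f(A)$. Theorem \ref{f(A)} now identifies $f(A)=[-1/2,1/2]$, whence $-d/c\in[-1/2,1/2]$, i.e. $|d/c|\le 1/2$, which is the assertion.

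I do not expect a genuine obstacle, as the corollary is engineered to be an immediate consequence of Theorem \ref{f(A)}; the only subtleties are bookkeeping. The two points that warrant a moment's care are ruling out $c=0$ (where $-d/c$ is undefined), which rests on the nonvanishing of $E_2$ on $\F$ implicit in the analyticity of $f$, and verifying that $z\in\F$ legitimately lies in the closed set $\overline{\F}$ over which $A$ is defined, so that the membership $z\in A$ is valid. Both are routine once the harder Theorem \ref{f(A)} is granted, so the real difficulty lies entirely upstream in establishing that theorem.
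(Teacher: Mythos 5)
Your proof is correct and takes essentially the same route the paper intends: the corollary is read off immediately from the fundamental relation (\ref{fandE}) together with Theorem \ref{f(A)}, since $f(z)=-d/c\in\R$ forces $v(z)=0$ and hence $z\in A$, and your explicit exclusion of the degenerate case $c=0$ (via periodicity and the nonvanishing of $E_2$ on $\{z:\im z>0.53\}\supset\F$) is a detail the paper leaves implicit. One small phrasing correction: the nonvanishing of $E_2$ in that region comes from Lemma \ref{lem-bdE} (which gives $|E_2(z)-1|<1$ for $y>0.53$), and the analyticity of $f$ is a \emph{consequence} of that nonvanishing rather than the reason for it, so you should cite the bound directly instead of arguing from the analyticity of $f$.
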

In particular Corollary \ref{cor1} uniquely determines the
fundamental domain around $\frac{a}{c}$ which contains the zero of $E_2$.
%

The more precise location of zeros of $E_2$ given in   (\ref{bdH}) and  (\ref{bdF}) follow from bounds for $E_2$ and its derivative. 
%
%
%

To prove  Theorem \ref{f(A)}  
we will use the implicit function theorem. To ease the  notation let $f=u+iv$. We are interested in $v(x,y)=0$. We need the following estimates. 

\begin{lem} \label{lem-bdE}Let $z=x+iy\in \UHP$ and let $q=e^{2\pi iz}$.
We have the following estimates. 
\begin{align} 
|E_2(z)-1| &\le\frac{24|q|}{(1-|q|)^3},\label{bdE}\\
23.48|q| &< |E_2(z)-1| \quad \text{ for $y\ge\frac{\sqrt{3}}{2}$},\\
|E_2'(z)| &\leq 48\pi \frac{|q|(1+|q|)}{(1-|q|)^5} \label{bdEp}
\end{align}
\end{lem}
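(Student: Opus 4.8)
The plan is to read off all three bounds from the Lambert series $E_2(z)-1=-24\sum_{n=1}^\infty\sigma(n)q^n$, regarded as a power series in $q$ convergent for $|q|<1$, i.e. for $y>0$; this convergence legitimizes both the termwise triangle-inequality estimates and the termwise differentiation used for \eqref{bdEp}. The only arithmetic fact I need about the divisor sum is the elementary estimate $\sigma(n)=\sum_{d\mid n}d\le\sum_{d=1}^{n}d=\binom{n+1}{2}$, which I combine with the two generating-function identities $\sum_{n\ge1}\binom{n+1}{2}x^n=\frac{x}{(1-x)^3}$ and $x\frac{d}{dx}\frac{x}{(1-x)^3}=\sum_{n\ge1}n\binom{n+1}{2}x^n=\frac{x(1+2x)}{(1-x)^4}$, both valid for $0\le x<1$.

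For the upper bound \eqref{bdE} I apply the triangle inequality termwise and then $\sigma(n)\le\binom{n+1}{2}$, so that $|E_2(z)-1|\le24\sum_{n\ge1}\sigma(n)|q|^n\le24\sum_{n\ge1}\binom{n+1}{2}|q|^n=\frac{24|q|}{(1-|q|)^3}$. For \eqref{bdEp} I differentiate termwise to get $E_2'(z)=-48\pi i\sum_{n\ge1}n\sigma(n)q^n$, and the same two steps give $|E_2'(z)|\le48\pi\sum_{n\ge1}n\sigma(n)|q|^n\le48\pi\frac{|q|(1+2|q|)}{(1-|q|)^4}$; since $(1+2x)(1-x)=1+x-2x^2\le1+x$ on $[0,1)$, this is at most $48\pi\frac{|q|(1+|q|)}{(1-|q|)^5}$, which is the stated form (in fact the derivation yields the slightly sharper fourth-power bound).

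The middle inequality is a lower bound, so here I would isolate the leading term: writing $E_2(z)-1=-24q-24\sum_{n\ge2}\sigma(n)q^n$ and using the reverse triangle inequality gives $|E_2(z)-1|\ge24|q|\bigl(1-g(|q|)\bigr)$ with $g(x):=\sum_{n\ge2}\sigma(n)x^{n-1}$. The function $g$ has nonnegative coefficients, hence is increasing, and by the first identity $g(x)\le\sum_{n\ge2}\binom{n+1}{2}x^{n-1}=\frac{1}{(1-x)^3}-1$. For $y\ge\frac{\sqrt3}{2}$ one has $|q|=e^{-2\pi y}\le e^{-\pi\sqrt3}$, so it remains to check that the bracket stays above $23.48/24$ throughout this range, which by monotonicity reduces to the single numerical inequality at $x=e^{-\pi\sqrt3}$.

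The main obstacle is exactly this last numerical point: everything else is a mechanical use of the triangle inequality and the two closed forms, but the explicit constant $23.48$ must be certified, i.e. I must verify that $\frac{1}{(1-e^{-\pi\sqrt3})^3}-1$ is comfortably below $1-\frac{23.48}{24}\approx0.0217$ (a direct evaluation gives roughly $0.013$, so the margin is safe). Care is needed only to ensure the rounded constant genuinely survives the estimate rather than sitting at its boundary.
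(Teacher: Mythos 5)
Your proposal is correct and follows essentially the same route as the paper: a termwise triangle-inequality estimate on the $q$-expansion with a closed-form geometric-series bound, the only cosmetic difference being that you bound $\sigma(n)\le n(n+1)/2$ in the divisor-sum series while the paper bounds $\frac{1}{1-|q|^n}\le\frac{1}{1-|q|}$ in the Lambert series, yielding the identical bound $\frac{24|q|}{(1-|q|)^3}$. The paper leaves the lower bound and the derivative estimate as ``along the same lines,'' and your fleshed-out versions are sound --- indeed your derivative bound $48\pi\frac{|q|(1+2|q|)}{(1-|q|)^4}$ is slightly sharper than the stated one before you relax it, and your numerical check $\frac{1}{(1-e^{-\pi\sqrt{3}})^3}-1\approx 0.0131<0.0217$ certifies the constant $23.48$ with a comfortable margin.
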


In particular for $y>0.53$ we have $0<|E_2(z)-1|<1$ and so $E_2(z)\neq 0$ in this region and hence   $f$ in (\ref{f}) is well defined. 


\begin{proof}
We have 
$$
|E_2(w)-1|\le 24\sum_{n=1}^\infty \frac{n|q|^n}{1-|q|^n}\le\frac{24}{1-|q|}\sum_{n=1}^\infty n|q|^n=\frac{24|q|}{(1-|q|)^3}.
$$

The claim for the lower bound for $E_2(z)$ and $E_2'(z)$ follows along the same lines.
\end{proof}

\begin{lem} \label{bdfp}
If $\im z\geq 1$ then 
\begin{equation} |f'(z)-1|<0.57
\end{equation}
\end{lem}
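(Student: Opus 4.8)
The plan is to reduce the claim to a size estimate for $E_2'/E_2^2$ and then quote Lemma \ref{lem-bdE}. Differentiating the definition (\ref{f}) by the chain rule gives
\begin{equation}
f'(z)-1=\frac{6i}{\pi}\,\frac{E_2'(z)}{E_2(z)^2},
\end{equation}
so that $|f'(z)-1|=\frac{6}{\pi}\,\frac{|E_2'(z)|}{|E_2(z)|^{2}}$. Hence the statement is equivalent to an upper bound for $|E_2'(z)|$ together with a lower bound for $|E_2(z)|$, and both are exactly what Lemma \ref{lem-bdE} provides.

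Writing $|q|=e^{-2\pi y}$, I would bound the numerator by (\ref{bdEp}) and the denominator by $|E_2(z)|\ge 1-|E_2(z)-1|\ge 1-\frac{24|q|}{(1-|q|)^3}$ from (\ref{bdE}); since $\frac{6}{\pi}\cdot 48\pi=288$ this yields
\begin{equation}
|f'(z)-1|\le \frac{288\,|q|\,(1+|q|)\,(1-|q|)^{-5}}{\bigl(1-24\,|q|\,(1-|q|)^{-3}\bigr)^{2}}.
\end{equation}
The right-hand side is increasing in $|q|$ on $[0,1)$, and $\im z\ge 1$ forces $|q|\le e^{-2\pi}$, so it suffices to evaluate the bound at $|q|=e^{-2\pi}$ and compare with the claimed constant. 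As $e^{-2\pi}\approx 1.87\times 10^{-3}$ is tiny, the leading term $288|q|$ dominates while the correction factors stay within a fraction of a percent of $1$.

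The only real difficulty is sharpness, since the estimate is essentially tight at the bottom of the region. The extremum sits near $z=i$, where the exact value $E_2(i)=\frac{3}{\pi}$ and the standard Ramanujan identity $E_2'=\frac{\pi i}{6}\bigl(E_2^2-E_4\bigr)$ give $|f'(i)-1|=\frac{2\pi}{3}|E_2'(i)|\approx 0.596$; by the monotonicity just noted this is the largest value of $|f'-1|$ on $\{\im z\ge 1\}$. Thus the whole content of the lemma is one numerically delicate evaluation of the displayed quotient at $|q|=e^{-2\pi}$: the leading term alone already contributes $288\,e^{-2\pi}\approx 0.54$, so there is almost no slack and the correction factors must be tracked with care rather than bounded away, with the monotonicity in $|q|$ then extending the conclusion to the whole half-plane $\im z\ge 1$.
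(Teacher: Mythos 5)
Your reduction is exactly the paper's: the identity $|f'(z)-1|=\frac{6}{\pi}\,|E_2'(z)|/|E_2(z)|^2$, the bounds of Lemma \ref{lem-bdE}, and monotonicity in $|q|$ to push the evaluation down to $\im z=1$. The paper phrases the last step as the chains $|E_2(z)|\geq |E_2(i\im z)|\geq |E_2(i)|$ and $|E_2'(z)|\leq |E_2'(i\im z)|\leq |E_2'(i)|$, which is the same reduction to the extremal point $z=i$; methodologically there is nothing to distinguish the two arguments.

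However, your final paragraph contains a contradiction that you leave unresolved, and it is a genuine one. You correctly compute $|f'(i)-1|=\frac{2\pi}{3}|E_2'(i)|\approx 0.5964$ (indeed $E_2(i)=3/\pi$, $E_4(i)=3\,\Gamma(1/4)^8/(2\pi)^6\approx 1.45576$, so $|E_2'(i)|=\frac{\pi}{6}\left|E_2(i)^2-E_4(i)\right|\approx 0.28477$), and $z=i$ lies in the region $\im z\geq 1$. So the stated inequality fails at $z=i$: no amount of ``tracking the correction factors with care'' can bring the bound below $0.57$, and your displayed quotient at $|q|=e^{-2\pi}$ in fact evaluates to $\approx 0.5964$ as well (numerator $\approx 0.5439$, denominator $(3/\pi)^2\approx 0.9119$), not to something under $0.57$. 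You should have drawn the conclusion your own numbers force: the constant in the lemma is a slip. The paper's proof has the same feature --- the bounds it invokes give $\frac{6}{\pi}|E_2'(i)|/|E_2(i)|^2\approx 0.5964$; the value $0.57$ plausibly arose from dividing by $|E_2(i)|$ only once, since $\frac{6}{\pi}|E_2'(i)|/|E_2(i)|\approx 0.5695<0.57$. The damage is contained: replacing $0.57$ by any constant below $0.6$ (say $0.597$) still yields $\partial_y v(z)=\re f'(z)\geq 1-|f'(z)-1|>0.4$ in Lemma \ref{prop-monotone}, so the monotonicity of $v$ and Theorems \ref{maint} and \ref{main2} are unaffected. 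A correct write-up of your argument would state the corrected constant explicitly rather than asserting that a delicate evaluation rescues $0.57$.
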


\begin{proof}
Since 
$$
|f'(z)-1|=\frac{6|E_2'(z)|}{\pi |E_2(z)|^2}
$$
the result follows from Lemma \ref{lem-bdE} and the bounds (valid for $\im z\geq 1$)
$$
|E_2(z)|\geq |E_2(i\im z)|\geq |E_2(i)| 
$$
and
$$
|E_2'(z)|\leq |E_2'(i\im z)|\leq |E_2'(i)|.
$$

\end{proof}

We will also need the following

\begin{lem}\label{lem-Imf}
For $x\in [-1/2,1/2]$ and $y \in [\sqrt{3}/2,1]$ we have
$$
\im f(x+iy)<0.
$$
\end{lem}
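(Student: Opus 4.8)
The plan is to write $\im f$ explicitly in terms of the real part and modulus of $E_2$, and then reduce the claim to a clean inequality that the estimates of Lemma \ref{lem-bdE} handle comfortably. Writing $E_2(z)=P+iQ$ with $P=\re E_2(z)$, a direct computation from the definition $f(z)=z-\frac{6i}{\pi E_2(z)}$ gives
\[
\im f(x+iy)=y-\frac{6\,\re E_2(z)}{\pi\,|E_2(z)|^2}.
\]
Thus $\im f<0$ is equivalent to $\frac{\pi y}{6}|E_2(z)|^2<\re E_2(z)$, and it suffices to bound the two sides separately over the rectangle. Note that the argument will use no information about $x$, since $|q|=e^{-2\pi y}$ depends on $y$ alone, so the resulting bound is automatically uniform in $x\in[-1/2,1/2]$.

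First I would observe that on the strip $y\ge \sqrt3/2$ the nome $|q|$ is largest at $y=\sqrt3/2$, where $|q_0|=e^{-\pi\sqrt3}\approx 0.0043$. Lemma \ref{lem-bdE}, specifically the bound (\ref{bdE}), then yields the uniform estimate $|E_2(z)-1|\le \frac{24|q_0|}{(1-|q_0|)^3}=:\varepsilon$ on the whole region $y\in[\sqrt3/2,1]$, from which $\re E_2(z)\ge 1-\varepsilon$ and $|E_2(z)|^2\le(1+\varepsilon)^2$. Since the lower bound on the ratio $\frac{6\,\re E_2}{\pi|E_2|^2}$ only improves as $|q|$ decreases, these worst-case values may be used throughout.

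Combining these, the right-hand side satisfies
\[
\frac{6\,\re E_2(z)}{\pi\,|E_2(z)|^2}\ge \frac{6(1-\varepsilon)}{\pi(1+\varepsilon)^2},
\]
and with $\varepsilon\approx 0.105$ this lower bound is roughly $1.4$, comfortably above the maximal value $y=1$ of the left-hand side. Hence $\im f(x+iy)\le 1-\frac{6(1-\varepsilon)}{\pi(1+\varepsilon)^2}<0$ throughout the rectangle, which is the claim.

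The estimates here are not delicate — the gap between the two sides is of order $0.4$ — so the only point requiring care is that the bound on $|E_2-1|$ be evaluated at the bottom edge $y=\sqrt3/2$ (largest $|q|$) while the subtracted term $y$ be taken at the top edge $y=1$, these being the two independent worst cases. I expect this bookkeeping, rather than any genuine analytic difficulty, to be the main thing to get right.
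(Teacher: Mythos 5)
Your proposal is correct and follows essentially the same route as the paper's own proof: both write $\im f(x+iy)=y-\frac{6\,\re E_2(z)}{\pi|E_2(z)|^2}$, bound $\re E_2\ge 1-t$ and $|E_2|\le 1+t$ uniformly in $x$ with $t=\frac{24e^{-\pi\sqrt3}}{(1-e^{-\pi\sqrt3})^3}<0.1054$ (the worst case at the bottom edge $y=\sqrt3/2$), and compare $y\le 1$ against $\frac{6(1-t)}{\pi(1+t)^2}\approx 1.4$. Your monotonicity remark justifying the worst-case evaluation is a correct (and slightly more explicit) version of the paper's implicit use of the same fact.
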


\begin{proof}
For any $x$ we have 
$$
\re E_2(x+iy) \geq 1-\frac{24e^{-2\pi y}}{(1-e^{-2\pi y})^3}
$$
and
$$
|E_2(x+iy)|\leq 1+\frac{24e^{-2\pi y}}{(1-e^{-2\pi y})^3}
$$

Therefore for $y \in [\sqrt{3}/2,1]$ we have
$$
\im f(x+iy) = y - \frac{6 \re E_2(x+iy)}{\pi |E_2(x+iy)|^2} < 1- \frac{6(1-t)}{\pi (1+t)^2}  
$$
where 
$$t=\frac{24e^{-2\pi \sqrt{3}/2}}{(1-e^{-2\pi \sqrt{3}/2})^3}.$$

The lemma follows from the numerical estimate $t<0.1054$.  
\end{proof}
Since $\partial_y v= \partial_x u = \re f'(z)$ we also have

\begin{lem}\label{prop-monotone}
If $y\geq 1$ then $\partial_y v(x,y)>0$.
\end{lem}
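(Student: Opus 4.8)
The plan is to exploit the fact that $f$ is analytic wherever it is defined—in particular on the region $\{\im z > 0.53\}$, which contains the strip $\im z \ge 1$ at hand. Writing $f=u+iv$, the Cauchy--Riemann equations give $u_x=v_y$ and $u_y=-v_x$, so that
\begin{equation*}
f'(z)=u_x+iv_x = \partial_y v - i\,\partial_y u.
\end{equation*}
In particular $\partial_y v(x,y)=\re f'(z)$, which is exactly the identity flagged in the sentence preceding the statement. Thus proving $\partial_y v>0$ is equivalent to proving $\re f'(z)>0$ on the region $\im z\ge 1$.

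Next I would invoke Lemma \ref{bdfp}, which asserts that $|f'(z)-1|<0.57$ whenever $\im z\ge 1$. Geometrically this confines $f'(z)$ to the open disk of radius $0.57$ centered at the point $1$ in the complex plane. Since $0.57<1$, this disk lies entirely in the open right half-plane, and the real part of any point in it is bounded below by $1-0.57=0.43$. Hence
\begin{equation*}
\partial_y v(x,y)=\re f'(z)\ge 1-|f'(z)-1|>1-0.57=0.43>0
\end{equation*}
for all $z=x+iy$ with $y\ge 1$, which is the desired conclusion.

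There is essentially no obstacle remaining here: the analytic substance has already been discharged in Lemma \ref{bdfp}, whose proof in turn rests on the elementary bounds of Lemma \ref{lem-bdE} together with the monotonicity comparisons $|E_2(z)|\ge|E_2(i)|$ and $|E_2'(z)|\le|E_2'(i)|$ on the vertical line. The only conceptual point to make explicit is the Cauchy--Riemann identification of $\partial_y v$ with $\re f'$, after which the claim is a one-line consequence of the disk bound. If anything deserves a word of caution, it is merely that the bound $0.57$ is genuinely below $1$, so that the entire image disk stays strictly in the right half-plane; any weaker estimate on $|f'-1|$ that allowed the value $1$ to be exceeded would fail to yield strict positivity, but the margin $0.43$ is comfortable.
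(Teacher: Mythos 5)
Your proof is correct and is essentially identical to the paper's: the paper also notes $\partial_y v = \partial_x u = \re f'(z)$ via Cauchy--Riemann and then applies Lemma \ref{bdfp} to conclude $\re f'(z) > 1 - 0.57 > 0$ (the paper states the bound as $0.4$, you as $0.43$; the argument is the same). Nothing is missing.
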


\begin{proof}
By Lemma \ref{bdfp},
$$
|f'(z)-1|<0.57.
$$
So $\partial_yv(z) = \re f'(z)>0.4$.
\end{proof}

From Lemma  \ref{prop-monotone} it follows that for each $x \in [-1/2,1/2]$ there can be only one $y$ such that $v(x+iy)=0$. By
Lemma \ref{lem-Imf}, it is also clear that such a $y$ exists since $\lim_{y\to \infty} v(x+iy)=\infty$. Therefore there is a function $\phi: [-1/2,1/2] \to (0,\infty)$ such that $x+iy \in \mathcal{F}$ and $v(x,y)=0$ implies $y=\phi(x)$.

Moreover this $\phi$ is differentiable by the implicit function theorem, and we have $\phi'(x)=-\partial_x v/\partial_y v$. 

Let now the arc $A$ be as in (\ref{def_A}). We know from El Basraoui and
Sebbar that there are points $z_{1/2}$ and $z_{-1/2}$ on the both of the vertical boundaries
of $\mathcal {F}$, where $f$ takes the values $-1/2$ and $1/2$. They correspond to the zeros of $E_2$ with real parts $1/2$ and $-1/2$. 

By the intermediate value theorem it follows that $f$ restricted to the arc $A$, (which is the same as $u$ since $v$ vanishes on $A$) takes all values between $-1/2$ and $1/2$. This part of the claim shows that there is a zero in each Ford circle. To prove that there is only one zero in each Ford circle, we need to show that $f(A)=u(A)=[-1/2,1/2]$. This we prove by showing that $u$ decreases along $A$ as $x$ increases from $-1/2$ to $1/2$. 

\begin{prop}
Let $f=u+iv$ and $\phi:[-1/2,1/2]\to (1,\infty)$ the function such that 
$$
\{z \in \F: v(z)=0\} = \{ (x,\phi(x)) : x \in [-1/2,1/2]\}
$$
For $|x|<1/2$ we have $$\frac{d}{dx} u(x,\phi(x))>0.$$
\end{prop}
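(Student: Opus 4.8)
The plan is to differentiate $u(x,\phi(x))$ by the chain rule and then use the analyticity of $f$ to collapse the resulting expression into something manifestly positive. First I would record that the arc $A$ lies entirely in the region $\{y>1\}$: by Lemma \ref{lem-Imf} we have $v(x+iy)<0$ for $y\in[\sqrt{3}/2,1]$, while by Lemma \ref{prop-monotone} the function $v$ is strictly increasing in $y$ once $y\ge 1$ and tends to $+\infty$; hence the unique zero $y=\phi(x)$ of $v$ must lie strictly above $1$, which is exactly the sharpening $\phi:[-1/2,1/2]\to(1,\infty)$. This places all of $A$ in the region where $E_2\neq 0$ (Lemma \ref{lem-bdE}), so $f=u+iv$ is analytic in a neighborhood of $A$ and the Cauchy--Riemann equations $\partial_x u=\partial_y v$ and $\partial_y u=-\partial_x v$ are available there.

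Next I would use the implicit function theorem, already invoked in the discussion above, which gives $\phi'(x)=-\partial_x v/\partial_y v$ with $\partial_y v>0$ on $A$. Differentiating along the arc,
\[
\frac{d}{dx}u(x,\phi(x))=\partial_x u+\partial_y u\,\phi'(x)=\partial_x u-\frac{\partial_y u\,\partial_x v}{\partial_y v}.
\]
Substituting the Cauchy--Riemann relations $\partial_x u=\partial_y v$ and $\partial_y u=-\partial_x v$ turns the right-hand side into
\[
\partial_y v+\frac{(\partial_x v)^2}{\partial_y v}=\frac{(\partial_y v)^2+(\partial_x v)^2}{\partial_y v}=\frac{|f'(z)|^2}{\partial_y v},
\]
where in the last step I use $|f'|^2=(\partial_x u)^2+(\partial_x v)^2=(\partial_y v)^2+(\partial_x v)^2$. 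Finally, since $z=(x,\phi(x))\in A\subset\{y>1\}$, Lemma \ref{prop-monotone} gives $\partial_y v=\re f'(z)>0$, while plainly $|f'(z)|^2\ge(\partial_y v)^2>0$, so the quotient is strictly positive and $\frac{d}{dx}u(x,\phi(x))>0$ for all $|x|<1/2$.

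I do not expect a genuine obstacle here: once the arc is confined to $\{y>1\}$ the computation is essentially forced, and the only real content is the algebraic collapse to $|f'|^2/\partial_y v$ via the Cauchy--Riemann equations, after which positivity is immediate. The single point requiring care is the justification that $f$ is actually analytic along the whole of $A$ (so that the Cauchy--Riemann equations may be used and $f'$ makes sense); this is precisely why the preliminary step confirming $\phi>1$, and hence $E_2\neq 0$ on $A$ via Lemma \ref{lem-bdE}, is needed before the differentiation is carried out.
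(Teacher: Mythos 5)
Your proposal is correct and follows essentially the same route as the paper: implicit differentiation giving $\phi'=-\partial_x v/\partial_y v$, the Cauchy--Riemann equations collapsing the chain-rule expression to $|f'|^2/\partial_y v$, and positivity of $\partial_y v$ from Lemma \ref{prop-monotone}. Your preliminary verification that the arc lies in $\{y>1\}$ (via Lemmas \ref{lem-Imf} and \ref{prop-monotone}), so that $f$ is analytic there, is exactly the content the paper establishes in the discussion immediately preceding the proposition.
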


\begin{proof}
We need to show that 
$$
\frac{d}{dx} u(x,\phi(x)) = \partial_x u(x,\phi(x)+\partial _y u(x,\phi(x))\phi'(x)>0.
$$ 

By implicit differentiation of $v(x,\phi(x))=0$ we have $\phi'(x)=-\partial_x v/\partial_yv$. Therefore 

$$
 \frac{d}{dx} u(x,\phi(x)) = \frac{\partial_x u \partial_y v-\partial_y u\partial_xv}{\partial_yv}  = \frac{(\partial_xu)^2+(\partial_y u)^2}{\partial_yv} = \frac{|f'|^2}{\partial_yv} > 0.
$$

\end{proof}
This also finishes the proof of Theorem \ref{f(A)} and hence the first claim in Theorem  \ref{maint}.

The next Proposition on the other hand proves the estimates (\ref{bdH}) and (\ref{bdF}) about the location of zeros in Theorem \ref{maint} and Theorem \ref{main2}. 
%
%
%
\begin{prop}\label{better_est}
If $E_2(\gamma z)=0$, $z=x+iy\in {\mathcal{F}}$, 
$\gamma=\left(\mx{a&b\\c&d}\right)\in SL_2(\Z)$ with $c>0$  and $\t=\gamma z$ then 
$1.909<y<1.91$ and
\begin{align}
0.000144&<|E_2(z)-1|<0.000149,\label{e2_est2}\\
0.00027&<\left|z-\left(-\frac{d}{c}+\frac{6}{\pi}i\right)\right|<0.00029,\\
0.0000750\frac{1}{c^2}&<\left|\t-\left(\frac{a}{c}+\frac{\pi}{6c^2}i\right)\right|< 0.0000777\frac{1}{c^2}.
\end{align}
\end{prop}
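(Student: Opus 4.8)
The plan is to reduce all three estimates to a single two-sided bound on $|E_2(z)-1|$, exploiting that the hypothesis $E_2(\gamma z)=0$ forces $z$ high into the fundamental domain, where $E_2$ is exponentially close to $1$. By the fundamental relation (\ref{fandE}), $E_2(\gamma z)=0$ is equivalent to $f(z)=-d/c\in\R$, and unwinding the definition (\ref{f}) of $f$ this reads
\begin{equation*}
z+\frac{d}{c}=\frac{6i}{\pi E_2(z)}.
\end{equation*}
Taking imaginary parts gives the identity $y=\frac{6}{\pi}\re\frac{1}{E_2(z)}$, which is the engine for locating $y$.

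First I would pin down $y$, and hence $|q|=e^{-2\pi y}$, by a short bootstrap. The earlier analysis already supplies the crude a priori bound $y>1$; feeding this into the upper estimate (\ref{bdE}) of Lemma \ref{lem-bdE} bounds $|E_2(z)-1|$, and since $\bigl|\re\frac{1}{E_2}-1\bigr|\le\frac{|E_2-1|}{|E_2|}$ the identity above forces $y$ into a neighbourhood of $6/\pi\approx 1.9099$. Iterating once or twice --- each pass sharpens the bound on $|q|$, which sharpens the bound on $|E_2-1|$, which sharpens the bound on $y$ --- the process converges rapidly because $E_2-1$ is of size $e^{-12}$. This yields the stated window $1.909<y<1.91$, and plugging the resulting range for $|q|$ into both the upper bound (\ref{bdE}) and the lower bound $23.48|q|<|E_2-1|$ of Lemma \ref{lem-bdE} gives the two-sided estimate (\ref{e2_est2}), namely $0.000144<|E_2(z)-1|<0.000149$.

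The two geometric estimates then follow from exact identities, with no further analysis. From $z+d/c=\frac{6i}{\pi E_2(z)}$ I get
\begin{equation*}
z-\Bigl(-\frac{d}{c}+\frac{6}{\pi}i\Bigr)=\frac{6i}{\pi}\Bigl(\frac{1}{E_2(z)}-1\Bigr)=\frac{6i}{\pi}\cdot\frac{1-E_2(z)}{E_2(z)},
\end{equation*}
so that $\bigl|z-(-\frac{d}{c}+\frac{6}{\pi}i)\bigr|=\frac{6}{\pi}\frac{|E_2(z)-1|}{|E_2(z)|}$; combining (\ref{e2_est2}) with $|E_2(z)|=1+O(10^{-4})$ yields the second estimate. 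For the third, the cocycle computation $\gamma z-\frac{a}{c}=\frac{-1}{c(cz+d)}$ together with $cz+d=\frac{6ic}{\pi E_2(z)}$ collapses to the clean identity
\begin{equation*}
\t-\frac{a}{c}-\frac{\pi}{6c^2}i=\frac{\pi i}{6c^2}\bigl(E_2(z)-1\bigr),
\end{equation*}
whence $\bigl|\t-(\frac{a}{c}+\frac{\pi}{6c^2}i)\bigr|=\frac{\pi}{6c^2}|E_2(z)-1|$, and (\ref{e2_est2}) gives the stated bounds after multiplying by $\pi/6$.

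The only genuine work is the first step: establishing the sharp window for $y$ and the matching two-sided bound on $|E_2(z)-1|$. The difficulty is entirely one of bookkeeping, since one must track the upper and lower bounds through the bootstrap simultaneously --- the lower bound on $|E_2-1|$ needs an upper bound on $y$ (to lower-bound $|q|$), while the upper bound on $|E_2-1|$ needs a lower bound on $y$. Because the coefficients collapse the dependence so cleanly, I expect the numerics to close after two iterations; once (\ref{e2_est2}) is in hand, the remaining two displays are immediate.
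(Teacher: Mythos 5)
Your proposal is correct and follows essentially the same route as the paper: a bootstrap (starting from a crude a priori bound on $y$, feeding $|q|$ into Lemma \ref{lem-bdE} and iterating twice) that pins $y$ near $\frac{6}{\pi}$ and gives the two-sided bound (\ref{e2_est2}), followed by the two exact identities $\left|z-\left(-\frac{d}{c}+\frac{6}{\pi}i\right)\right|=\frac{6}{\pi}\frac{|E_2(z)-1|}{|E_2(z)|}$ and $\t-\frac{a}{c}-\frac{\pi}{6c^2}i=\frac{\pi i}{6c^2}\left(E_2(z)-1\right)$, which is exactly how the paper concludes. The only cosmetic differences are that you run the bootstrap through the imaginary-part identity $y=\frac{6}{\pi}\re\frac{1}{E_2(z)}$ where the paper iterates on the full modulus $\left|z-\left(-\frac{d}{c}+\frac{6}{\pi}i\right)\right|$, and you derive the $\t$-identity from the cocycle relation $\gamma z-\frac{a}{c}=\frac{-1}{c(cz+d)}$ where the paper applies the transformation law (\ref{E2_trafo}) with $\gamma^{-1}$ --- both are equivalent computations.
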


\begin{proof}
 We first note that since $y\ge\frac{\sqrt{3}}{2}$ on   $\overline{\mathcal{F}}$, using (\ref{bdE}) we have 
\begin{align}
|E_2(z)-1|&\le\frac{24}{\left(1-e^{-\pi\sqrt{3}}\right)^3}|q|< 24.32|q| < 0.106.
\end{align}
 And similarly 
 \begin{align}
23.48|q|< |E_2(z)-1|\label{eq_low_w_est}
\end{align}

  We next recall that $E_2(\gamma z)=0$ gives    $f(z)=-d/c$ which in return implies $z-\frac{6i}{\pi E_2(z)}=\frac{-d}{c}.$ Hence

\begin{align}
\left|z-\left(-\frac{d}{c}+\frac{6}{\pi}i\right)\right|&=\frac{6}{\pi}\frac{|E_2(z)-1|}{ |E_2(z)|}
\le \frac{6}{\pi}\frac{24|q|}{(1-|q|)^3\left(1-\frac{24|q|}{(1-|q|)^3}\right)^2}\\
&<\frac{144}{\pi(1-27|q|)^2}|q|\label{eq1}\\
&<\frac{6}{\pi}\frac{0.106}{(1-0.106)^2}<0.26
\end{align}
This in turn means that $y>\frac{6}{\pi}-0.26>1.64$ which gives a better
estimate for (\ref{eq1}). Repeating this procedure twice already  gives the
nice estimates:
\begin{align}
y&\ge 1.909,\quad |q|\le 6.18 \times10^{-6},\\
|E_2(z)-1| &< 24\frac{1}{(1-|q|)^3}|q| < 0.000149,\\
\left|z-\left(-\frac{d}{c}+\frac{6}{\pi}i\right)\right|&<0.00029.
\end{align}
The estimates also show that $y<\frac{6}{\pi}+0.00029<1.91$ resp. (using
(\ref{eq_low_w_est})) $|E_2(z)-1|>0.000144$, so
\begin{align}
0.00027< \left|z-(-\frac{d}{c}+\frac{6}{\pi}i)\right|.
\end{align}
In particular we also have (\ref{e2_est2}).
On the other hand  using $\t=\gamma^{-1}z$  and $E_2(\gamma z)=0$ we have 
\begin{align}
 E(z)=E(\gamma^{-1}\t)=  \frac{6}{\pi i}(-c)(-c\t+a) 
\end{align}
 and 
 $$\t=\frac{a}{c}+\frac{\pi}{6c^2}i+\frac{\pi}{6c^2}i(E_2(z)-1)$$
So
\begin{align}
\left|\t-\frac{a}{c}-\frac{\pi}{6c^2}i\right|&< \frac{\pi}{6c^2}\left|E_2(z)-1\right| < 0.0000777\frac{1}{c^2},
\end{align}
where we used (\ref{e2_est2}). Similarly we get:
\begin{align}
0.0000750\frac{1}{c^2}<&\frac{\pi}{6c^2}\left|E_2(z)-1\right|<\left|\t-\frac{a}{c}-\frac{\pi}{6c^2}i\right|.
\end{align}
\end{proof}

\section{Numerical calculations}
In this section we provide   additional numerical data on the zeros of $E_2(z)$.

 Let $F(x)=f^{-1}(x)$ denote the inverse of $f(z)$ restricted to $\R$. (Note that $F$
defines an (injective) holomorphic map in a neighbourhood of $\R$.)


Recall the $\SL_2(\Z)$-translates of
the zeros of $E_2$ inside the strict fundamental domain $\mathcal{F}$
are exactly given by $F(x)$ for $x\in\Q\cap\left(-\frac{1}{2},\frac{1}{2}\right]$.

\subsection{Fourier expansion}
\ \\
Let $H(x):=F(x)-x$. Since $F(x+1)=F(x)+1$, $H(x)$ is $1$-periodic, so it
possesses a Fourier expansion:
\begin{align}
H(x)=\sum_{n\in\Z}c_n\exp(2\pi i nx).
\end{align}

To get a formal expression for it we use the standard change of coordinates $q=e^{2\pi i \tau}$. Then $f(\tau)=\tilde{f}(q)$, $H(\t)=f^{-1}(\t)-\t= \frac{1}{2\pi i}(\tilde{H}\circ q)(\t)$ where
 
\begin{equation}
\tilde{H}(q)=\left(\log(\tilde{f}^{-1}(\log(q)))-\log(q)\right).
\end{equation}

Note that although $\tilde{f}$ has a term involving $\log q$ this cancels out and in fact we have that 
\begin{multline}
2\pi iH\left(\t-\frac{6i}{\pi}\right)=\tilde{H}\left(q\exp({12})\right)=
\\
\log\left(\frac{\left(q\exp\left(\frac{12}{E_2(q)}-12\right)\right)^{-1}}{q\exp(12)}\right)=\\
-12+\log\left(\frac{\left(q\exp\left(\frac{12}{E_2(q)}-12\right)\right)^{-1}}{q}\right)\in\Q[[q]].
\end{multline} 

The existence of the last formal power series and the rationality of its coefficients follow from the facts that $\frac{12}{E_2(q)}-12 \in \Q[[q]]$ and is divisible by $q$ in this ring.

The coefficients of $\tilde{H}(q\exp(12))$  can be computed explicitely and we have
\begin{align*}
\tilde{H}\left(qe^{12}\right)=-12 - 288q + 75168q^2 - 29321856q^3 + 13541649696q^4 + O(q^5)
\end{align*}
(Note that higher index coefficients no longer all lie in $\Z$.) 

From this the constants $c_n$ can be expressed, for example $c_0=\frac{6i}{\pi}$ and
$c_1=144\frac{e^{-12}}{\pi}i$, and $c_n=r_n \frac{e^{-12n}}{\pi} i$ for some rational number $r_n$. 

Calculations of the Fourier coefficients by numerical integration show perfect agreement with these formal computations:

The numerical results for $c_n$, $n\ge 0$ were calculated using sage:
\begin{align*}
c_0&=\mphantom{-}1.90985931710274402922660516047017234441351574888547738497i\\
c_1&=\mphantom{-}0.00028162994902227980400370919939063856289594529890275357i\\
c_2&=-0.00000045163288929282012635455207577614911204985274433204i\\
c_3&=\mphantom{-}0.00000000108245596925811696405920054080771200657423178116i\\
c_4&=-0.00000000000307154282808538137128721799597366123291772057i\\
c_5&=\mphantom{-}0.00000000000000957094344711630129941209014246199967789040i\\
c_6&=-0.00000000000000003165503372449709626701121359204401518804i\\
c_7&=\mphantom{-}0.00000000000000000010911333723210259127123321555755721374i\\
c_8&=-0.00000000000000000000038769575689989675972304397185016487i\\
c_9&=\mphantom{-}0.00000000000000000000000140991650336337176376718140570072i\\
c_{10}&=-0.00000000000000000000000000522238601930043508518075691844i
\end{align*}

Below is a picture of $H(x)$ for $x\in\Q\cap\left[-\frac{1}{2},\frac{1}{2}\right]$,
$x=\frac{d}{100}$ with $(d,100)=1$ (recall that this is the error term in the approximation $F(x)\approx x$).
\begin{figure}[h!]
\centering
\includegraphics[width=0.65\textwidth]{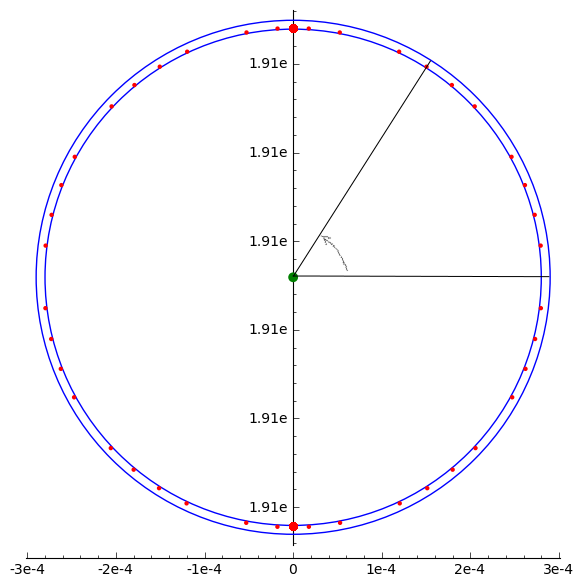}
\end{figure}

The closed curve $H(\R)=H\left(\left[-\frac{1}{2},\frac{1}{2}\right]\right)$
is already extremely well approximated by the first two terms $c_0+c_1\exp(2\pi ix)$.
Indeed estimate (\ref{bdF}) for $x=\frac{d}{c}$ (in fact for any $x\in\R$) and
$H\left(\frac{d}{c}\right)=F\left(\frac{d}{c}\right)-\frac{d}{c}$ shows:
\begin{align}
0.00027 < \left|H(x)-\frac{6i}{\pi}\right| < 0.00029.
\end{align}
These circular bounds are also drawn in the picture.

A good bound on the remaining Fourier coefficients $c_n$, $n\ge 2$ would
give an estimate on the \textit{angle} of $H(x)-c_0$.
Namely that the angle is \textit{close} to 
$
2\pi\left(x+\frac{1}{4}\right).
$ 
Note in particular that $H(0)\in i\R$ is located on the top of $H(\R)$, so
the angle of $H(0)-c_0$ is exactly $2\pi\frac{1}{4}$.
Similarly $H\left(\frac{-1}{2}\right)\in i\R$ is located on the bottom of
$H(\R)$ and the angle of $H\left(\frac{-1}{2}\right)-c_0$ is exactly
$2\pi\frac{-1}{4}$. That's both under the assumption that $c_0\in i\R$.

\subsection{Taylor expansion}
\ \\
We can also examine the Taylor expansion $F(x)=\sum_{n=0}^\infty b_n x^n$ around $x=0$.

By Lagrange inversion theorem the Taylor coefficients can be calculated in terms of the Taylor coefficients 
of $f(z)$ at $ z_0$, where $z_0$ is the
$SL_2(\Z)$-translate in the struct fundamental domain of the zero of $E_2$
on the imaginary axis.

Note that $f(z)$ is a rational function in $E_2(z)$ and $z$. It's derivatives can be calculated as rational functions in $E_2(z)$, $E_4(z)$, $E_6(z)$ and $z$. Therefore the Taylor expansion at $z_0$  can be expressed as
a rational function in $X:=E_4(z_0)$, $Y:=E_6(z_0)$, $Z:=E_2(z_0)$
and $z_0$. By the transformation property (\ref{E2_trafo}) we have $z_0=-\frac{12}{2\pi i Z}$, so in fact we get a rational function in $X$, $Y$ and $Z$.

For the first coefficient we have
$b_0=\left(\frac{6i}{\pi}\right)\frac{1}{Z}$.
For the remaining coefficients ($n>0$)  calculations seem to indicate that
$b_n$ is of the following shape:
\begin{align}
b_n = (-1)^n\left(\frac{6i}{\pi}\right)^{1-n}X^{-2n+1}a_n(X,Y,Z),
\end{align}
where $a_n(X,Y,Z)\in\Q[X,Y,Z]$ is a (homogenous) polynomial of degree
$10n-6$,
where $\deg(X):=4$, $\deg(Y):=6$, $\deg(Z):=2$.
The degree of the (homogenous) rational function $b_n(X,Y,Z)$ is $2(n-1)$.

Here is a list of the first few  polynomials $a_n(X,Y,Z)$ for $n>0$:
\begin{align*}
a_1 &= Z^2\\
a_2 &= XZ^5 + X^2Z^3 - 2YZ^4\\
a_3 &= X^2Z^8 - 2X^3Z^6 - 4XYZ^7 + X^4Z^4 - 4X^2YZ^5 + 8Y^2Z^6\\
a_4 &= X^3Z^{11} - 5X^4Z^9 - 6X^2YZ^{10} - 9X^5Z^7 + 20X^3YZ^8\\
    &  \quad + 20XY^2Z^9 + X^6Z^5 - 6X^4YZ^6 + 24X^2Y^2Z^7 - 40Y^3Z^8
\\
a_5 &= X^4Z^{14} - 8X^5Z^{12} - 8X^3YZ^{13} + \frac{66}{5}X^6Z^{10}\\
    &  \quad + 56X^4YZ^{11} + 36X^2Y^2Z^{12} - 20X^7Z^8 + 104X^5YZ^9\\
    &  \quad - \frac{836}{5}X^3Y^2Z^{10} - 112XY^3Z^{11} + X^8Z^6 - 8X^6YZ^7\\
    &  \quad + 48X^4Y^2Z^8 - 160X^2Y^3Z^9 + 224Y^4Z^{10}
\end{align*}


The numerical evaluations of those polynomials ($Z=E_2(z_0)$, etc.) give for Taylor coefficients   $b_n$  :
\begin{align*}
b_0 &= \mphantom{-}1.9101404964982709820376545357984830913777487030i\\
b_1 &= -0.9982361219015924374815710878280361648431190825\\
b_2 &= \mphantom{-}0.0055236842011260453610739166397990326586651337i\\
b_3 &= -0.01149489150274208316313259093815041563703067326\\
b_4 &= \mphantom{-}0.0178252611095229253133162329291348589135823077i\\
b_5 &= \mphantom{-}0.0218243134639575211728774441381952550676521991\\
b_6 &= \mphantom{-}0.0216634844629385759461618124642355350591382590i\\
b_7 &= -0.0173461622715009362175946164162223267332088060\\
b_8 &= \mphantom{-}0.0104172812309514952250501361120571266678673695i\\
b_9 &= \mphantom{-}0.0029825116383882005761911965146832408919333302
\end{align*}

For the zeros of $E_2$ near $1/c$ found in \cite{ElSe} by El Basraoui and Sebbar, the Taylor polynomials give a fast converging asymptotic series in $c$. In general the Taylor polynomial is inferior to the Fourier approximation and starts to give a reasonably good fit only after $n=8$.

Below is a picture of the Taylor approximation for $n=8$ together with plots
of $F(x)$ for $x\in\Q\cap\left[-\frac{1}{2},\frac{1}{2}\right]$,
$x=\frac{d}{100}$, $(d,100)=1$.

\begin{figure}[h!]
\centering
\includegraphics[width=0.65\textwidth]{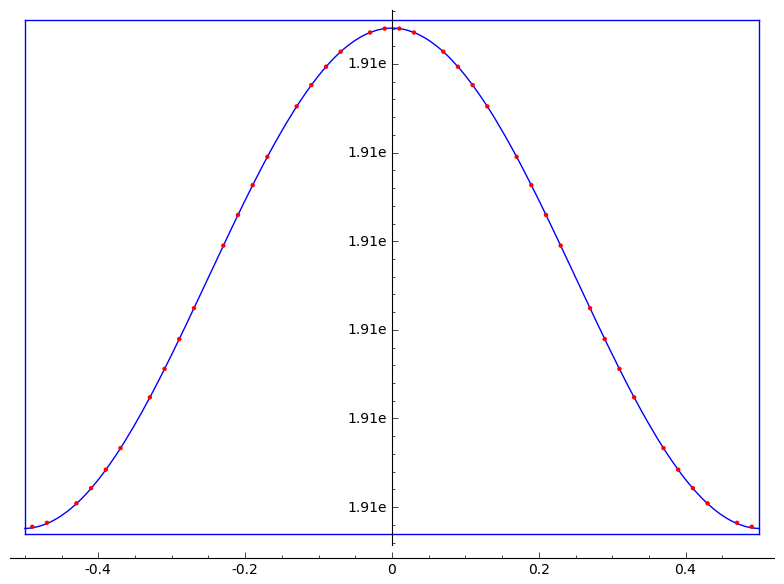}
\end{figure}
The plot also shows the   bounds from (\ref{bdF}).
\newpage

\bibliographystyle{alpha}
\bibliography{literature}
\end{document}